\def\e{\kern 0.08em}
\def\be{\kern -.1em}
\def\le{\kern 0.03em}
\def\lbe{\kern -.025em}
\def\g{\varGamma}
\newcommand{\ksep}{k^{\le\rm sep}}
\newcommand{\kbar}{\overline{k}}
\newcommand{\abar}{\mkern 3.5mu\overline{\mkern-3.5mu A}}
\newcommand{\xbar}{\mkern 3mu\overline{\mkern-3mu X}}
\newcommand{\spec}{\mathrm{ Spec}\,}
\newcommand{\Hom}{ \mathrm{Hom}} 
\newcommand{\s}{\mathscr}
\newcommand{\mr}{\mathrm}
\numberwithin{equation}{section}
\newtheorem{lemma}[equation]{Lemma} 
\newtheorem{theorem}[equation]{Theorem}
\newtheorem{proposition}[equation]{Proposition}
\theoremstyle{definition}
\newtheorem{definition}[equation]{Definition}
\theoremstyle{remark}
\newtheorem{remark}[equation]{Remark}
\newtheorem{remarks}[equation]{Remarks}
\begin{document}

\input xy     %%for diagrams
\xyoption{all}%%

\title[Connected components and Weil restriction] {Galois sets of connected  components and Weil restriction}

\author{Alessandra Bertapelle
\and Cristian D. Gonz\'alez-Avil\'es }
 \thanks{A. Bertapelle,  Dipartimento di Matematica, via Trieste 63, I-35121 Padova, Italy, Email: alessandra.bertapelle@unipd.it}
 \thanks{C. D.  Gonz\'alez-Avil\'es, Departamento de Matem\'aticas, Universidad de La Serena, Cisternas 1200, La Serena 1700000, Chile, Email: cgonzalez@userena.cl} \date{\today}

\subjclass[2010]{Primary 14A20, 14A99}

\keywords{connected components, Weil restriction}

\begin{abstract}
Let $k$ be a field, $A$ a finite $k$-algebra and $X$ a smooth $A$-scheme.
We describe the Galois set of connected components of the Weil restriction $\Re_{A\lbe/\lbe k}\lbe(X)$ in terms of the sets of connected components of the geometric fibers of $X$. 
\end{abstract}

\maketitle

\section{Introduction}

Let $k$ be a field with fixed algebraic closure $\kbar$, let $\ksep$ be the separable closure of $k$ in $\kbar$ and set $\g=\mr{Aut}_{k}\!\left(\e\kbar\e\right)=\mr{Aut}_{k}(\ksep)$. Further, let $A$ be a finite $k$-algebra and set
\begin{equation}\label{es}
\mathcal S=\left\{\spec h\colon h\in\mr{Hom}_{\e k{\text -\rm alg}}\!\left(A,\kbar\,\right)\right\}.
\end{equation}
For every $A$-scheme $X$ and every $s\in\mathcal S$, we will write $X_{\lbe s}=X\!\times_{\spec A}\!(\spec\kbar,s)$, where $(\spec\kbar,s)$ denotes $\spec\kbar$ regarded as an $A$-scheme via $s$.

The object of this note is to prove the following statement.

\begin{theorem}\label{main0} Let $X$ be a smooth $A$-scheme. Then $\Re_{A\lbe/k}(X)$ is a smooth algebraic space over $k$ which is isomorphic to a scheme over a suitable finite and separable extension of $k$. Further, there exists a canonical isomorphism of $\g$-sets 
\begin{equation*}\label{pi0-p0}
\pi_{0}\!\be\left(\le\Re_{A\lbe/k}(X)\right)\stackrel{\sim}{\to}\prod_{s\e\in\e \mathcal S} \pi_{0}(X_{\lbe s}),
\end{equation*}
where $\mathcal S$ is the set {\rm \eqref{es}}. 
\end{theorem}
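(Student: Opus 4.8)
The plan is to reduce the statement, by base change to $\kbar$, to the case in which $A$ splits into local artinian factors with residue field $\kbar$, and then to identify the Weil restriction along each such factor with an affine-space bundle over the corresponding geometric fibre of $X$.

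First I would invoke the base-change formula for Weil restriction: for every field extension $k'/k$ there is a canonical isomorphism $\Re_{A/k}(X)\times_{k}k'\cong\Re_{A'/k'}(X')$, where $A'=A\otimes_{k}k'$ and $X'=X\times_{\spec A}\spec A'$. Taking $k'=\kbar$ and using that $A$ is artinian, write $A\otimes_{k}\kbar=\prod_{s\in\mathcal S}B_{s}$ with each $B_{s}$ local artinian with residue field $\kbar$. The point is that the index set is \emph{canonically} $\mathcal S$: the closed points of $\spec(A\otimes_{k}\kbar)$ are exactly its $\kbar$-valued points, that is, the elements of $\mathrm{Hom}_{k\text{-alg}}(A,\kbar)$, and each of them is a connected component since $A\otimes_{k}\kbar$ is artinian. (Working over $\kbar$ rather than over $\ksep$ is essential here: if $A$ has inseparable residue fields, $\mathrm{Hom}_{k}(A,\ksep)$ can be strictly smaller than $\mathcal S$.) Writing accordingly $X_{\kbar}=\coprod_{s}Y_{s}$ with $Y_{s}=X\times_{\spec A}\spec B_{s}$, and using that Weil restriction carries a product of base rings to a product, one obtains
\[
\Re_{A/k}(X)\times_{k}\kbar\;\cong\;\prod_{s\in\mathcal S}\Re_{B_{s}/\kbar}(Y_{s}).
\]

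The heart of the proof is the analysis of a single factor. For each $s$ the structure map $\kbar\to B_{s}$ has a canonical retraction $B_{s}\twoheadrightarrow B_{s}/\mathfrak{m}_{s}=\kbar$, which gives a $\kbar$-point of $\spec B_{s}$ and hence a morphism $\rho_{s}\colon\Re_{B_{s}/\kbar}(Y_{s})\to Y_{s}\times_{\spec B_{s}}\spec\kbar=X_{s}$. Since $X$ is smooth over $A$, $Y_{s}$ is smooth over $B_{s}$, and because $\mathfrak{m}_{s}$ is nilpotent the infinitesimal lifting criterion shows that $\rho_{s}$ is smooth and surjective with geometrically connected fibres (affine spaces). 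As a surjective open map with connected fibres induces a bijection on sets of connected components, this yields $\pi_{0}(\Re_{B_{s}/\kbar}(Y_{s}))\stackrel{\sim}{\to}\pi_{0}(X_{s})$. I would also record here that $\Re_{B_{s}/\kbar}(Y_{s})$ is a genuine scheme, smooth over $\kbar$: smoothness is preserved under Weil restriction along the finite free extension $\kbar\to B_{s}$, and schematicity holds because $\spec B_{s}\to\spec\kbar$ is a universal homeomorphism — for a local ring $R$ the scheme $\spec(B_{s}\otimes_{\kbar}R)$ has a unique closed point, to which every point specialises, so any $R$-valued point of the Weil restriction factors through a single member of a fixed affine open cover of $Y_{s}$. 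Taking the product over $s$ and descending, one deduces that $\Re_{A/k}(X)$ is a smooth algebraic space over $k$, that it becomes a scheme over a finite separable extension $k'/k$ splitting the separable parts of the residue fields of $A$ (over such $k'$ the connected components of $\spec(A\otimes_{k}k')$ map to $\spec k'$ by finite universal homeomorphisms, so the schematicity argument above applies), and the isomorphism $\pi_{0}\bigl(\Re_{A/k}(X)\times_{k}\kbar\bigr)\cong\prod_{s\in\mathcal S}\pi_{0}(X_{s})$.

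Finally I would verify $\g$-equivariance. On the left $\g=\mathrm{Aut}_{k}(\kbar)$ acts through its action on $\kbar$; on the right it permutes the factors by $s\mapsto\sigma\circ s$, using the tautological isomorphisms $X_{s}\cong X_{\sigma\circ s}$ induced by $\spec\sigma$. This is checked by chasing $\sigma$ through the identifications made above: $\sigma$ acts on $A\otimes_{k}\kbar$ through the second tensor factor, hence permutes the complete system of orthogonal idempotents — and with it the factors $B_{s}$, the schemes $Y_{s}$, and the factors $\Re_{B_{s}/\kbar}(Y_{s})$ — precisely by $s\mapsto\sigma\circ s$, compatibly with the retractions defining the $\rho_{s}$; since every map involved is canonical, equivariance follows. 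Finally, the $\g$-set $\pi_{0}(\Re_{A/k}(X))$ (defined via $\ksep$) may be computed over $\kbar$ instead, because $\spec\kbar\to\spec\ksep$ is a universal homeomorphism and therefore induces bijections on sets of connected components. I expect the main obstacle to be exactly this last equivariance bookkeeping, carried out in the presence of possibly inseparable residue fields of $A$ — which is precisely what forces the argument to be run over $\kbar$; by contrast the identification $\pi_{0}(\Re_{B_{s}/\kbar}(Y_{s}))\cong\pi_{0}(X_{s})$ is clean once $\rho_{s}$ is in hand, and the schematicity-over-a-finite-separable-extension claim, though somewhat technical, is routine.
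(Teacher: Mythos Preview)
Your proposal is correct and follows essentially the same route as the paper: base-change to $\kbar$, split $\abar$ into local factors indexed by $\mathcal S$, use the product formula for Weil restriction, analyse each local factor via the canonical map $\rho_{s}\colon \Re_{B_{s}/\kbar}(Y_{s})\to X_{s}$, and then check $\g$-equivariance by tracking the action through these identifications. The only cosmetic difference is that for the key local step the paper decomposes $X$ into connected components, uses the open-covering property of Weil restriction along a universal homeomorphism, and cites \cite[Proposition~A.5.9]{cgp} for the connectedness of each $\Re_{B_{s}/\kbar}(X_{i})$, whereas you argue directly that $\rho_{s}$ is smooth, surjective, with affine-space fibres---which is essentially the content of that cited proposition.
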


\section{Preliminaries on Weil restriction}
Let $f\colon S^{\e\prime}\to S$ be a morphism of schemes and let $X^{\prime}$ be a presheaf of sets on the category $(\mathrm{Sch}/S^{\e\prime}\le)$ of $S^{\e\prime}$-schemes. Then the direct image presheaf
$f_{*}X^{\prime}$, defined by $T\mapsto  X^{\le\prime}(T\lbe\times_{S}\lbe S^{\e\prime}\le)$, is a presheaf of sets on $(\mathrm{Sch}/S\e)$. Further, by \cite[\S 7.6, Lemma 1, p.~191]{blr}, for every $S$-scheme $T$ there exists a canonical bijection (of sets of morphisms of presheaves of sets)
\begin{equation}\label{wr}
\Hom_{\le S}\e(T,f_*X^{\le\prime}\e)\overset{\!\sim}{\to}\Hom_{
S^{\le\prime}}(T\!\times_{S}\!S^{\e\prime},X^{\le\prime}\e).
\end{equation} 
If $X^{\prime}$ is represented by an $S^{\le\prime}$-scheme (which we will also denote by  $X^{\prime}\e$), the presheaf $f_{*}X^{\prime}$ will be denoted by $\Re_{S^{\le\prime}\be/S}(X^{\prime})$. If the latter presheaf is represented by an $S$-scheme, denoted by $\Re_{S^{\le\prime}\be/S}(X^{\prime})$, then we will say that {\it the Weil restriction of $X^{\prime}$ along $f$ exists}.  If $S=\spec A$ and $S^{\e\prime}=\spec A^{\prime}$, we will write $\Re_{A^{\prime}\lbe/\lbe A}$ for $\Re_{S^{\prime}\be/S}$. We refer the reader to \cite[\S 7.6]{blr} and \cite[Appendix A.5]{cgp} for more information on the Weil restriction functor.

In general, to guarantee the existence of the Weil restriction of an $S^{\le\prime}$-scheme $X^{\prime}$ along $f$, it is necessary to impose appropriate conditions on both $f$ and $X^{\prime}$ \cite[\S 7.6, Theorem 4, p.~ 194]{blr}. By a minor modification of the proof of \cite[\S 7.6, Theorem 4, p.~ 194]{blr}, we obtained in \cite[Corollary 2.16]{bga} the following statement.

\begin{proposition}\label{wr-uh} Let $f\colon S^{\e\prime}\to S$ be a finite and locally free {\rm universal homeomorphism}. Then $\Re_{S^{\le\prime}\be/S}(X^{\prime}\e)$ exists for {\rm every} $S^{\le\prime}$-scheme $X^{\prime}$. Further, if $\mathcal U$ is an arbitrary  open  covering of $X^{\prime}$, then $\{\Re_{S^{\le\prime}\be/\lbe S}(U\le)\}_{U\le\in\e\mathcal U}$ is an open  covering of $\Re_{S^{\le\prime}\be/S}(X^{\prime}\e)$.\qed
\end{proposition}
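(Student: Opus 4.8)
The plan is to prove both assertions by revisiting the proof of \cite[\S 7.6, Theorem 4]{blr} and observing that the hypothesis imposed there on $X'$—that every finite subset of a fibre of $X'\to S$ be contained in an affine open—becomes vacuous once $f$ is a universal homeomorphism. Write $S'=S^{\e\prime}$; for an $S$-scheme $T$ put $T'=T\times_{S}S'$, with projection $\pi_{T}\colon T'\to T$, a base change of $f$ and hence again finite and locally free. By \eqref{wr}, a $T$-valued point of $\Re_{S'/S}(X')$ is the same as an $S'$-morphism $g\colon T'\to X'$, and this dictionary will be used throughout.

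The decisive observation is that the fibres of $\pi_{T}$ are single points. A universal homeomorphism is radicial (universally injective) and stays radicial and surjective after any base change, so $\pi_{T}$ is finite, radicial and surjective. Thus over $s\in S$ there is a unique point $s'\in S'$, the residue extension $k(s')/k(s)$ is purely inseparable, and for $t\in T$ over $s$ the scheme-theoretic fibre is $\pi_{T}^{-1}(t)=\spec\!\big(k(t)\otimes_{k(s)}\mathcal O_{S',s'}/\mathfrak m_{s}\mathcal O_{S',s'}\big)$. Since $\mathcal O_{S',s'}/\mathfrak m_{s}\mathcal O_{S',s'}$ is local Artinian with residue field $k(s')$ purely inseparable over $k(s)$, and since $k(t)\otimes_{k(s)}k(s')$ is local, this tensor product is a local Artinian ring; hence $\pi_{T}^{-1}(t)$ consists of a single point. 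Consequently, for any $g$ as above and any $t\in T$, the image $g(\pi_{T}^{-1}(t))$ is a single point of $X'$ and therefore lies in some affine open subscheme of $X'$—with no hypothesis whatsoever on $X'$.

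Two standard facts complete the picture. First, if $U'\subseteq X'$ is affine open then $\Re_{S'/S}(U')$ is representable by an affine $S$-scheme; this is the base case of restriction of scalars along a finite locally free morphism and uses only the finite local freeness of $f$. Second, for \emph{any} open $U\subseteq X'$ the morphism $\Re_{S'/S}(U)\to\Re_{S'/S}(X')$ is an open immersion: given $g\colon T'\to X'$, the set $Z=T'\setminus g^{-1}(U)$ is closed, $\pi_{T}$ is finite hence closed, so $V=T\setminus\pi_{T}(Z)$ is open, and $V$ is precisely the largest open over which $g$ factors through $U$; one checks that $V$ represents the fibre product $T\times_{\Re_{S'/S}(X')}\Re_{S'/S}(U)$, so the inclusion of functors is representable by open immersions.

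Now existence follows: as $U'$ ranges over the affine open subschemes of $X'$, the open subfunctors $\Re_{S'/S}(U')$ cover $\Re_{S'/S}(X')$, because for any $g$ and any $t\in T$ the singleton $g(\pi_{T}^{-1}(t))$ lies in some affine $U'$, whence $g$ factors through $U'$ over the neighbourhood $V$ of $t$ produced above; gluing these representable affine open subfunctors yields a scheme representing $\Re_{S'/S}(X')$. The same covering argument, applied to an arbitrary open covering $\mathcal U$ of $X'$ in place of the affine opens, shows that $\{\Re_{S'/S}(U)\}_{U\in\mathcal U}$ is an open covering of $\Re_{S'/S}(X')$: each member is an open subfunctor by the second fact above, and the singleton $g(\pi_{T}^{-1}(t))$ lands in some $U\in\mathcal U$, so every point of $\Re_{S'/S}(X')$ lies in the image of some $\Re_{S'/S}(U)$. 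I expect the main obstacle to be the careful bookkeeping in the second fact and in the gluing—verifying that $V$ genuinely represents the intersection functor and that the affine charts glue compatibly; the single-point property of the fibres of $\pi_{T}$ is exactly what reduces each of these verifications to the already-settled affine and open-immersion cases.
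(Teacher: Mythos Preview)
Your proposal is correct and follows exactly the approach the paper indicates: the paper does not prove the proposition in-text but cites it as \cite[Corollary 2.16]{bga}, obtained as ``a minor modification of the proof of \cite[\S 7.6, Theorem 4]{blr}''. Your key observation---that the universal-homeomorphism hypothesis forces each fibre $\pi_{T}^{-1}(t)$ to be a single point, rendering BLR's condition on $X'$ (that every finite set of points in a fibre lie in an affine open) automatic---is precisely that minor modification, and the remaining steps are the standard ingredients of BLR's proof.
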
 

\begin{remark}\label{rem-wr} Note that the Weil restriction of a non-empty scheme may exist but be the empty scheme. For example, let $A$ be a local $k$-algebra with residue field $k$. Then the canonical closed immersion $\spec k\to \spec A$ induces a closed immersion $\Re_{A/k}(\spec k)\to \Re_{A/k}(\spec A)=\spec k$ \cite[\S 7.6, Proposition 2(ii), p.~192]{blr}. Now, by \eqref{wr},
\[
\mr{Hom}_k(\spec k,\Re_{A/k}(\spec k))\simeq  \mr{Hom}_{A}(\spec A, \spec k)\simeq \mr{Hom}_{A{\text -\rm alg}}(k,A).
\]
Since there exist no $A$-algebra homomorphisms $k\to A$ when $k$ is regarded as an $A$-algebra via the canonical projection $A\to k$, we conclude that $\Re_{A/k}(\spec k)=\emptyset$. 
\end{remark}

As is well-known, in some cases the Weil restriction of a scheme exists only in the larger category of algebraic spaces. For example:

\begin{lemma}\label{algs} Let $X$ be a smooth $A$-scheme. Then $\Re_{A\lbe/\lbe k}(X)$ is a smooth  algebraic space over $k$ which is isomorphic to a smooth scheme over a suitable finite and separable extension of $k$.
\end{lemma}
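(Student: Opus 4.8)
The plan is to find a finite \emph{separable} extension $k^{\prime}/k$ over which the Weil restriction becomes a smooth scheme, and then to descend this information back to $k$.

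First I would regard $F:=\Re_{A/k}(X)$ as the direct image presheaf $T\mapsto X\bigl(T\times_{k}\spec A\bigr)$ on $(\mathrm{Sch}/k)$; since $X$ is a scheme, $F$ is automatically a sheaf for the fppf topology. Next I would write $A=\prod_{i}A_{i}$ as a finite product of local Artinian $k$-algebras, let $\ell_{i}$ denote the residue field of $A_{i}$ and $\ell_{i,\mathrm{s}}$ the separable closure of $k$ in $\ell_{i}$, and choose a finite Galois extension $k^{\prime}/k$ inside $\ksep$ admitting a $k$-algebra embedding of each $\ell_{i,\mathrm{s}}$. Set $A^{\prime}=A\otimes_{k}k^{\prime}$ and $X^{\prime}=X\times_{\spec A}\spec A^{\prime}$.

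The key point of this choice is that every residue field of $A^{\prime}$ is purely inseparable over $k^{\prime}$: the residue fields of $A_{i}\otimes_{k}k^{\prime}$ are those of $\ell_{i}\otimes_{k}k^{\prime}=\ell_{i}\otimes_{\ell_{i,\mathrm{s}}}\bigl(\ell_{i,\mathrm{s}}\otimes_{k}k^{\prime}\bigr)$, and since $k^{\prime}/k$ is Galois and contains a copy of $\ell_{i,\mathrm{s}}$ the ring $\ell_{i,\mathrm{s}}\otimes_{k}k^{\prime}$ is a product of copies of $k^{\prime}$, so these residue fields are the compositum fields $\ell_{i}\cdot k^{\prime}$, which are purely inseparable over $k^{\prime}$ because $\ell_{i}/\ell_{i,\mathrm{s}}$ is. Writing $\spec A^{\prime}=\coprod_{\nu}\spec B_{\nu}$ with each $B_{\nu}$ local Artinian and setting $X_{\nu}:=X^{\prime}\times_{\spec A^{\prime}}\spec B_{\nu}$, it then follows that each structure morphism $\spec B_{\nu}\to\spec k^{\prime}$ is finite and locally free, surjective and radicial, hence a finite locally free universal homeomorphism. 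At this point Proposition \ref{wr-uh} applies and shows that $\Re_{B_{\nu}/k^{\prime}}(X_{\nu})$ exists as a $k^{\prime}$-scheme; since $X$ is smooth over $A$, and Weil restriction along a finite locally free morphism preserves smoothness (see, e.g., \cite[Appendix A.5]{cgp}), this scheme is smooth over $k^{\prime}$. Because formation of the direct image presheaf commutes with base change and carries a disjoint union in the source to a finite product over the base, $F\times_{\spec k}\spec k^{\prime}$ is canonically isomorphic to $\Re_{A^{\prime}/k^{\prime}}(X^{\prime})\cong\prod_{\nu}\Re_{B_{\nu}/k^{\prime}}(X_{\nu})$, a finite product of smooth $k^{\prime}$-schemes; this gives the last assertion of the lemma.

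Finally, to descend: the projection $F\times_{\spec k}\spec k^{\prime}\to F$ is the base change of the finite étale surjection $\spec k^{\prime}\to\spec k$, its source is a scheme, and $(F\times_{\spec k}\spec k^{\prime})\times_{F}(F\times_{\spec k}\spec k^{\prime})\cong F\times_{\spec k}\spec(k^{\prime}\otimes_{k}k^{\prime})$ is again a scheme, being finite étale over $F\times_{\spec k}\spec k^{\prime}$ since $k^{\prime}\otimes_{k}k^{\prime}$ is a finite étale $k^{\prime}$-algebra. Hence the fppf sheaf $F$ is an algebraic space over $k$, and it is smooth over $k$ since smoothness is fppf-local on the base. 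I expect the main obstacle to be the initial choice of $k^{\prime}$ together with the observation that after this separable base change $A^{\prime}$ need not be local, so that one must combine Proposition \ref{wr-uh} with the multiplicativity of Weil restriction in disjoint unions of the source to see that $F\times_{k}k^{\prime}$ is a scheme; the remaining ingredients—preservation of smoothness under Weil restriction along finite locally free morphisms, compatibility of Weil restriction with base change, and faithfully flat descent for algebraic spaces—are standard.
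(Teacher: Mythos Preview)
Your proposal is correct and follows essentially the same strategy as the paper: pass to a finite separable extension $k^{\prime}/k$ over which the residue fields of $A^{\prime}$ become purely inseparable, decompose $A^{\prime}$ into local factors so that Proposition~\ref{wr-uh} applies to each, and then descend the resulting smooth $k^{\prime}$-scheme to an algebraic space over $k$. The only notable difference is bookkeeping: the paper constructs $k^{\prime}$ as the separable closure of $k$ inside a field through which all $h\colon A\to\kbar$ factor, and it explicitly verifies quasi-separatedness of $\Re_{A/k}(X)$ (needed for the definition of algebraic space in \cite{blr}) via a diagram chase showing the relevant immersion $U\times_{Y}U\hookrightarrow U\times_{\spec k}U$ is quasi-compact, whereas you leave this to the general descent machinery.
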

\begin{proof} Let $k^{\e\prime}$ be an algebraic extension of $k$ such that all residue fields of $A^{\e\prime}=A\be\otimes_{\e k}\be k^{\e\prime}$ are purely inseparable (possibly trivial) extensions of $k^{\e\prime}$. This is the case, for example, if $k^{\e\prime}$ is either $\ksep$ or $\kbar$. Now recall that $A^{\prime}\simeq \prod_{\le m\e\in\e \s M^{\prime}} A_{m}^{\prime}$, where $\s M^{\le\prime}$ denotes the set of maximal ideals of $A^{\prime}$ \cite[Theorem 4.2(b), p.~33]{mil}.  Set $X^{\prime}=X\times_{\spec A}\spec  A^{\prime}$ and $X^{\prime}_{\!A_{m}^{\prime}}=X\times_{\spec A}\spec  A_{m}^{\prime}
\simeq X^{\prime}\times_{\spec A^{\prime}}\spec  A_{m}^{\prime}$. We claim that $\Re_{ A^{\prime}/k^{\prime}}( X^{\prime})$ exists in the category of smooth $k^{\e\prime}$-schemes. Indeed, since $\spec A^{\prime}\simeq \coprod_{\le m\e\in\e \s M^{\le\prime}} \spec   A^{\prime}_{m}$, we have 
\begin{equation}\label{eq-r}  
\Re_{ A^{\prime}\be/  k^{\le\prime}}(X^{\prime})\simeq \Re_{\!\!\!\displaystyle{\prod_{\le m\e\in\e \s M^{\prime}}}\!\!\! A^{\prime}_{\e m}/ k^{\prime}}\! \be\left(\, \coprod_{\le m\e\in\e \s M^{\prime}}(X^{\prime}\times_{\spec  A^{\prime}}\spec   A^{\prime}_m )\right)\simeq     \prod_{m\in  \s M^{\prime} }\Re_{  A^{\prime}_{m}/  k^{\prime}}( X^{\prime}_{\!A_{m}^{\prime}}),
\end{equation}
where the second isomorphism is immediate from \eqref{wr}. Since each factor $\Re_{  A^{\prime}_m/ k^{\prime}}(X^{\prime}_{A_{m}^{\prime}})$ is a smooth $k^{\e\prime}$-scheme by Proposition \ref{wr-uh} and \cite[\S 7.6, Proposition 5(h), p.~195]{blr}, our claim is proved. Now let $k^{\prime\prime}$ be a finite extension of $k$ inside $\kbar$ such that every homomorphism $h$ in \eqref{es} factors through $k^{\prime\prime}$. 
If $k^{\e\prime}$ is the separable closure of $k$ in $k^{\prime\prime}$, then the preceding argument applied to $k^{\e\prime}$ shows that $\Re_{ A^{\prime}\be/\lbe  k^{\le\prime}}(X^{\prime})\simeq \Re_{A\be/\lbe k}(X)\!\times_{\spec k}\!\spec k^{\e\prime}$ is a smooth $k^{\e\prime}$-scheme.  
Further, $\Re_{A^{\prime}\be/\lbe  k^{\le\prime}}(X^{\prime})$ is quasi-separated by \cite[I, Lemma 2.26, p.~51]{kn}. Set $Y=\Re_{ A\lbe/\lbe  k}(X)$, $U=\Re_{ A^{\prime}\be/\lbe  k^{\le\prime}}(X^{\prime}\le)$,  $U^{\prime}=U\!\times_{\spec k}\spec k^{\e\prime}=Y\!\times_{\spec k}\be \spec(\le k^{\e\prime}\!\otimes_{k}\! k^{\e\prime}\le)$. 
The following diagram commutes
\[
\xymatrix{ U\ar[d]_{\Delta_{\e U\be/k^{\le\prime}}}& \ar[l] U^{\prime}\!\times_{U}\be U^{\e\prime}\ar[r]^(0.35){\sim} \ar[d]^{\Psi^{\prime}} & \ar[r] (U\!\times_{Y}\be U)\times_{\spec k}\spec k^{\e\prime}\ar[d]^{\Psi\times \mathrm{id}_{\spec k^{\e\prime}}}&  U\!\times_{Y}\be U \ar[d]^{\Psi}
\\
U\!\times_{\spec  k^{\prime}}\! U&  U^{\e\prime}\!\times_{\spec  k^{\prime}}\be U^{\e\prime}\ar[l]\ar[r]^(0.35){\sim} &  \ar[r] (U\!\times_{\spec k}\be U)\times_{\spec k}\spec k^{\e\prime} & U \!\times_{\spec k} \be U\e,
}
\]
where $\Delta_{\e U\be/k^{\le\prime}}$ is the diagonal immersion and the maps $\Psi$ and $\Psi^{\e\prime}$ are canonical immersions. Note that the right- and left-hand squares are cartesian by construction and \cite[Proposition 1.4.8, p.~36]{ega1}, respectively. Since 
$\Delta_{\e U\be/k^{\le\prime}}$ is quasi-compact, so also are $\Psi^{\prime}$ and $\Psi$ by \cite[Propositions 6.1.5, (ii) and (iii), and 6.1.6, pp.~291 and 293]{ega1}. Thus the canonical morphism $\mathrm{pr}_{1}\colon \Re_{ A^{\prime}\be/\lbe  k^{\le\prime}}(X^{\prime}\le) \to \Re_{A\lbe/\lbe k}(X)$ satisfies the conditions stated in \cite[\S 8.4, Definition 4, p. 224]{blr}. To complete the proof, recall that those properties of morphisms of schemes that are local for the \'etale topology on both the source and the target, such as smoothness, carry over to the category of algebraic spaces, whence it suffices to check these properties on a representable \'etale covering. For a detailed proof of this fact, see \cite[lines 16--22, p.~225]{blr} and \cite[Remark 34.28.7, Lemma 55.22.1, Definition 55.22.2]{spa}. Since $\Re_{A^{\prime}\be/\lbe  k^{\le\prime}}(X^{\prime}\le)$ is a smooth and representable \'etale covering of $\Re_{A\be/\lbe k}(X)$, the latter is a smooth algebraic space over $k$, as claimed. 
\end{proof}

\section{Proof of Theorem \ref{main0}}

We need

\begin{lemma}\label{pi0-1} Let  $B$ be a finite local $\kbar$-algebra and let $X$ be a smooth $B$-scheme. Then $\Re_{B/\e\kbar}\e(X\le)$ is a smooth $\kbar$-scheme and there exists a canonical bijection
\[
\pi_{0}\le(\Re_{B/\le\kbar}\e(X))\simeq \pi_{0}\le(X_{\lbe s}),
\]
where $s\colon \spec \kbar \to \spec B$ is the unique geometric point of $B$.
\end{lemma}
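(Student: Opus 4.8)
The plan is to produce a canonical morphism of $\kbar$-schemes $q\colon\Re_{B/\kbar}(X)\to X_{\lbe s}$, to show that $q$ is smooth and surjective with connected fibers, and to deduce that $q$ induces a bijection on sets of connected components. Since $\kbar$ is algebraically closed and $B$ is finite over $\kbar$, the ring $B$ is Artinian local with residue field $\kbar$, and its maximal ideal $\mathfrak m$ is nilpotent. The structure morphism $\spec B\to\spec\kbar$ is finite and locally free, and it is a universal homeomorphism because $B\otimes_{\kbar}R=R\oplus(\mathfrak m\otimes_{\kbar}R)$ with $\mathfrak m\otimes_{\kbar}R$ nilpotent for every $\kbar$-algebra $R$. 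Hence $\Re_{B/\kbar}(X)$ exists by Proposition \ref{wr-uh}, and it is a smooth $\kbar$-scheme by \cite[\S 7.6, Proposition 5(h)]{blr}, just as in the proof of Lemma \ref{algs}. The reduction $B\twoheadrightarrow\kbar$ corresponds to a section $i\colon\spec\kbar\to\spec B$ of $\spec B\to\spec\kbar$, and $s=i$, so $X_{\lbe s}=X\times_{\spec B,\,i}\spec\kbar$. For a $\kbar$-scheme $T$ set $T_{B}=T\times_{\spec\kbar}\spec B$; the morphism $\sigma_{T}\colon T\to T_{B}$ determined by $\mathrm{id}_{T}$ and by $i$ (composed with the structure morphism $T\to\spec\kbar$) is a closed immersion, a section of the projection $T_{B}\to T$, and a $B$-morphism once $T$ is regarded as a $B$-scheme via $i$. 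By \eqref{wr} one identifies $\Re_{B/\kbar}(X)(T)$ with $\Hom_{B}(T_{B},X)$, while $X_{\lbe s}(T)$ is the set of $B$-morphisms $T\to X$ with $T$ over $\spec B$ via $i$; precomposition with $\sigma_{T}$ is functorial in $T$ and defines $q$.

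The key point, and the main obstacle, is that $q$ is, locally on $X$, a trivial affine-space bundle. Since $X\to\spec B$ is smooth, $X$ has an open covering $\{U\}$ with each $U$ \'etale over some $\mathbb{A}^{d}_{B}$. Because $\spec B$ is local and the closed immersion $\sigma_{T}(T)\hookrightarrow T_{B}$ is a nilpotent thickening (its ideal is contained in $\mathcal O_{T}\otimes_{\kbar}\mathfrak m$), a $B$-morphism $T_{B}\to X$ factors through $U$ as soon as its restriction along $\sigma_{T}$ does; hence $q^{-1}(U_{\lbe s})=\Re_{B/\kbar}(U)$, and $\{U_{\lbe s}\}$ covers $X_{\lbe s}$. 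For such a $U$, a $B$-morphism $T_{B}\to U$ is determined, by the unique infinitesimal lifting property of the \'etale morphism $U\to\mathbb{A}^{d}_{B}$ along $\sigma_{T}(T)\hookrightarrow T_{B}$, by its restriction along $\sigma_{T}$ together with the induced morphism $T_{B}\to\mathbb{A}^{d}_{B}$; this yields a canonical isomorphism $\Re_{B/\kbar}(U)\cong U_{\lbe s}\times_{\mathbb{A}^{d}_{\kbar}}\Re_{B/\kbar}(\mathbb{A}^{d}_{B})$ compatible with $q$ and the first projection. A direct computation, using a $\kbar$-basis of $B$ starting with $1$, identifies $\Re_{B/\kbar}(\mathbb{A}^{d}_{B})$ with $\mathbb{A}^{dN}_{\kbar}$, where $N=\dim_{\kbar}B$, and $q$ with a linear surjection onto $\mathbb{A}^{d}_{\kbar}$. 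Therefore over each $U_{\lbe s}$ the morphism $q$ is isomorphic to the projection $U_{\lbe s}\times\mathbb{A}^{d(N-1)}_{\kbar}\to U_{\lbe s}$; being local on source and target, $q$ is thus smooth, it is surjective, and its fibers over closed points are affine spaces, hence connected. (Alternatively, smoothness and surjectivity of $q$ follow from the infinitesimal lifting criterion and the formal smoothness of $X\to\spec B$, and connectedness of the fibers from the filtration of $\mathfrak m$ by its powers, which realizes each fiber as an iterated torsor under vector groups over $\spec\kbar$.)

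It remains to observe that a smooth surjective morphism $q\colon Y\to Z$ of $\kbar$-schemes locally of finite type, with connected fibers over all closed points, induces a bijection $\pi_{0}(Y)\stackrel{\sim}{\to}\pi_{0}(Z)$. Indeed, $q$ is open, so the image of a connected component of $Y$ is an open and connected subset of $Z$; each closed-point fiber, being connected, lies in a single connected component of $Y$; and since a nonempty open subscheme of $Z$ contains a closed point, one deduces that the preimage under $q$ of any connected component of $Z$ is connected, whence the map on $\pi_{0}$ is both surjective and injective. Applying this to $q$ gives the asserted bijection $\pi_{0}(\Re_{B/\kbar}(X))\simeq\pi_{0}(X_{\lbe s})$, which is canonical because $q$ is. The hard part is the local description of $q$ in the second paragraph: one must control the interaction of the Weil restriction functor with smooth (equivalently, \'etale) morphisms over the non-reduced base $\spec B$; once this is in hand, the remaining steps are formal.
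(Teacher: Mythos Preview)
Your argument is correct but follows a different route from the paper's. The paper also constructs the morphism $q=f_{*}(j_{X})\colon \Re_{B/\kbar}(X)\to X_{s}$, but instead of analyzing its fibers it transports the connected-component decomposition of $X_{s}$ to $X$ via the homeomorphism $X_{s}\hookrightarrow X$, obtains the open cover $\{X_{i}\}_{i\in\pi_{0}(X_{s})}$ of $X$, applies Proposition~\ref{wr-uh} to get $\Re_{B/\kbar}(X)=\bigcup_{i}\Re_{B/\kbar}(X_{i})$ with $q(\Re_{B/\kbar}(X_{i}))\subset X_{i,s}$, and then invokes \cite[Proposition A.5.9]{cgp} to conclude that each $\Re_{B/\kbar}(X_{i})$ is non-empty and connected. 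By contrast, you prove directly that $q$ is, Zariski-locally on $X_{s}$, a trivial $\mathbb{A}^{d(N-1)}$-bundle (via \'etale-local coordinates and infinitesimal lifting along the nilpotent thickening $T\hookrightarrow T_{B}$), and then deduce the $\pi_{0}$-bijection from the general fact about smooth surjections with connected fibers. Your approach is more self-contained---it essentially reproves the relevant content of \cite[Proposition A.5.9]{cgp} rather than citing it---and yields the finer information that $q$ is an affine-space fibration; the paper's approach is shorter once that reference is granted.
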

\begin{proof} Let $f\colon \spec B\to \spec \kbar$ be the structure morphism. By Lemma \ref{algs}, $f_{*}\le X=\Re_{B/\e\kbar}\e
(X)$ is a smooth $\kbar$-scheme.  Let $j_{X}\colon X\to s_{\lbe *}X_{\lbe s}$ be the canonical morphism of presheaves of sets which corresponds to $1_{\lbe X_{\lbe s}}$ via \eqref{wr}. Since $f\!\circ\! s=1_{\le\spec \kbar}$, we may identify $f_{*}(s_{*}X_{\lbe s})$ and $X_{\lbe s}$. Then
\begin{equation}\label{fj}
f_{*}(\e j_{\le X})\colon \Re_{B/\le\kbar}\e(X) \to  X_{\lbe s}
\end{equation}
is a morphism of smooth $k$-schemes which induces a map $\pi_{0}\le(\Re_{B/\le\kbar}\e(X))\to \pi_{0}\le(X_{\lbe s})$. To check that the latter map is indeed a bijection, we proceed as follows. Since $s$ is a universal homeomorphism, the closed immersion $X_{\lbe s}\to X$ is a homeomorphism. Now, for every $i\in \pi_{0}\le(X_{\lbe s})$, let $X_{i}$ be the corresponding connected component of $X$. Then $\{X_{i}\}_{i}$ is an open covering of $X$ whence, by Proposition \ref{wr-uh}, the $k$-scheme $\Re_{B/\le\kbar}\e(X)$ is covered by the open subschemes $\Re_{B/\le\kbar}\e(X_{\lbe i})$. 
Since $f_{*}(\e j_{\le X})$ maps $ \Re_{B/\le\kbar}\e(X_{i})$ into $X_{i,\e s}$, it remains only to check that each $k$-scheme $\Re_{B/\le\kbar}\e(X_{i})$ is connected and non-empty.  This follows from \cite[Proposition A.5.9]{cgp} (note that the quasi-projectivity hypothesis in this reference is only needed to guarantee the existence of the indicated Weil restriction, which in our case follows from Proposition \ref{wr-uh} above).
\end{proof}

\begin{remarks}\label{empty2}\indent
\begin{itemize}
\item[(a)] The map $f_{*}(\e j_{\le X})\!\left(\e\kbar\e\right)$ \eqref{fj} is that which corresponds via \eqref{wr} to the canonical map $X(B)\to X\!\!\left(\e\kbar\e\right)=X_{\lbe s}\!\lbe\left(\e\kbar\e\right)$ induced by composition with $s$.
\item[(b)] Remark \ref{rem-wr} shows that the lemma fails if $X$ is the (non-smooth) $B$-scheme $\spec\kbar$. See also \cite[comment after Proposition A.5.9]{cgp}.  
\end{itemize}
\end{remarks}

Let $A$ be a non-zero finite $k$-algebra, let $\abar=A\otimes_{k}\kbar$ and write ${\rm Max}\!\left(\lbe\abar\e\right)$ for the set of maximal ideals of $\abar$. The set $\mathcal S$ \eqref{es} is in bijection with both $\mr{Hom}_{\e k{\text -\rm alg}}\!\left(A,\kbar\,\right)$ and ${\rm Max}\!\left(\lbe\abar\e\right)$. Below these sets will be identified. For example, in the formula $\abar=\prod_{\e s\e\in\le\mathcal S}\abar_{s}$, the elements of $\mathcal S$ are being regarded as maximal ideals of $\abar$.

\smallskip

Let $X$ be an $A$-scheme, set $\xbar= X\times_{\spec  A}\spec  \abar\simeq X\times_{\spec k}\spec \kbar$ and let $X_{\be\abar_s}= X\times_{\spec   A}\spec  \abar_s\simeq \xbar\times_{\spec {\abar}}\spec  \abar_s$. We now describe the $\g$-action on
$\xbar$ in terms of the $\g$-actions on $\spec \abar$ and the covering $\xbar=\coprod_{\e s\le\in\le\mathcal S} X_{\be\abar_{s}}$. Let $\sigma\in\g$ and write $\sigma$ also for the corresponding $k$-linear automorphism of $\abar=A\otimes_{k}\kbar$. Note that $\sigma$ permutes the elements of $\mathcal S={\rm Max}\!\left(\lbe\abar\e\right)$ and induces an isomorphism of $A$-algebras $\abar_{ s}\simeq \abar_{\e\sigma(s)}$ for every $s\in \mathcal S$. By base change, we obtain a well-defined isomorphism of $A$-schemes $\sigma_{\lbe s}=\sigma_{\be s,\e\xbar}\e\colon X_{\be\abar_{\sigma(s)}}\!\simeq X_{\be\abar_s} $. Then the $k$-automorphism $\sigma_{\lbe\xbar}$ of $\xbar=\coprod_{\e s\le\in\le\mathcal S} X_{\be\abar_{s}}$ associated to $\sigma\in \g$ is that which restricts to the isomorphism $\sigma_{\lbe s}$ on  $X_{\be\abar_{\sigma(s)}}$ for every $s\le\in\le \mathcal S$. The preceding description applies, in particular, to $X=\spec A$.

\begin{remark}\label{rgal} Let the notation be as above and consider  the canonical isomorphism
\begin{equation}\label{iso}
\mathrm{Hom}_{\abar}\!\left(\le\spec\abar, \xbar\e\right)\overset{\sim}{\to} \prod_{s\le\in \le\mathcal S} \mathrm{Hom}_{\le\abar_s}\!\be\left(\le\spec \abar_s, \xbar_{\be\abar_s}\le\right), \quad u\mapsto (u_{s})_{s}.
\end{equation}
If $u\colon\spec\abar\to \xbar$ is an $\abar$-morphism and $\sigma\in\g$, then $u^{\sigma}$ is the $\abar$-morphism $\sigma_{\xbar}\circ u \circ \sigma^{-1}_{\be\spec {\abar}}\e$. On the other hand, $\g$ acts on the set on the right in \eqref{iso} via
$(u_{s})^{\le\sigma}=(\sigma_{\be s}\circ u_{\le\sigma(s)} \circ \sigma^{-1}_{\be s})$. Then \eqref{iso} is clearly a bijection of $\g$-sets.

\smallskip

For $s\le\in\le \mathcal S$, let $X_{s}$ denote the fiber of $X$ over the geometric point $s\colon \spec \kbar \to \spec A$ and note that this fiber can be identified with the fiber of $\xbar_{\!\abar_s}$ over the point $s\colon \spec \kbar \to \spec \abar_{s}$. Now let $\sigma_{\be s}\colon X_{\sigma(s)}\to X_{s}$ denote the morphism on geometric fibers induced by $\sigma_{\be s,\e\xbar}$ and consider the following composition of \eqref{iso} with the map induced by base-changes along the morphisms $s\colon \spec \kbar \to \spec \abar_{s}\e$:
\begin{equation}\label{gamma}
\mathrm{Hom}_{\abar}\!\left(\le\spec\abar, \xbar\e\right)\to \prod_{s\le\in \le\mathcal S} \mathrm{Hom}_{\le\abar_s}\!\be\left(\le\spec \abar_s, \xbar_{\be\abar_s}\le\right)\to \prod_{s\le\in \le\mathcal S}\mathrm{Hom}_{\e\kbar }\!\left(\le\spec \kbar, X_{s}\right). 
\end{equation}
We define a $\g$-action on the right-hand set in \eqref{gamma} by setting $(u_{s})^{\le\sigma}=(\sigma_{\be s}\circ u_{\le\sigma(s)} \circ \sigma^{-1}_{\be s})$. Then the preceding composition is a morphism of $\g$-sets.
\end{remark}

Assume now that $X$ is smooth. It follows from Lemma \ref{algs} that the sheaf $\Re_{ A \be/\lbe  k }(X )\times_{\spec k}\spec \kbar\simeq \Re_{ \abar \be/\e\kbar }(\xbar ) $ is represented by a smooth $\kbar$-scheme. We may therefore consider its set of connected components $\pi_{0}\big(\le\Re_{\abar \be/\e  \kbar }\!\left(\xbar\e\right)\!\big)$. Note that the stated isomorphism enables us to define natural $\g$-actions on $\Re_{\abar \be/\e  \kbar }\!\left(\xbar\e\right)$ and $\pi_{0}\big(\le\Re_{\abar \be/\e  \kbar }\!\left(\xbar\e\right)\!\big)$.

\begin{definition}\label{dfpi} Let $X$ be a smooth $A$-scheme. We define the $\g$-set $\pi_0(\Re_{ A \lbe/\lbe  k }(X ))$ to be the set $\pi_{0}\big(\le\Re_{\abar \be/\e  \kbar }\!\left(\xbar\e\right)\!\big)$ equipped with the natural $\g$-action described above.
\end{definition}

{\it We can now prove Theorem {\rm \ref{main0}}:}

\medskip

The first assertion of the theorem is the content of Lemma \ref{algs}. We now prove the second assertion, regarding $\pi_{0}\!\left(\le\Re_{A\lbe/k}(X)\right)$ as a $\g$-set via Definition \ref{dfpi}. Recall that $\spec \abar=\coprod_{\e s\le\in\le\mathcal S} \spec \abar_s$ and $X_{\be\abar_{s}}= X\times_{\spec  A}\spec \abar_{s}\simeq \xbar \times_{\spec {\abar}}\spec  \abar_{s}$. Now, as in \eqref{eq-r}, Lemma \ref{algs} yields a canonical isomorphism of smooth $\kbar$-schemes
\begin{equation}\label{eq-r2}
\Re_{\bar A/\le\kbar}\!\left(\le\xbar\e\right)\simeq  \prod_{s\le\in\le\mathcal S}\Re_{\abar_{\le s}/\le\kbar}\!\left(\le X _{\be\abar_{\le s}}\right)\lbe .
\end{equation}  
From the above isomorphism, \cite[I, \S 4, Corollary 6.10 p.~126]{dg} and Lemma \ref{pi0-1}, we conclude that there exist canonical bijections
\begin{equation*}
\pi_{0}\!\be\left(\Re_{\bar A/\le\kbar}\!\left(\le\xbar\e\right)\right)\simeq \pi_{0}\be\!\left(\,  \prod_{s\le\in\le\mathcal S}\Re_{\abar_{\le s}/\le\kbar}\!\left(\le X _{\be\abar_{\le s}}\right)\right) \simeq\prod_{s\le\in\le\mathcal S}\pi_{0}\!\left(\Re_{\abar_{\le s}/\le\kbar}\!\left(\le X _{\be\abar_{\le s}}\right) \right)\simeq \prod_{s\le\in\le\mathcal S} \pi_{0}(X_{s}).
\end{equation*} 
Let  
\[
\psi\colon \pi_{0}\!\be\left(\Re_{\bar A/\le\kbar}\!\left(\le\xbar\e\right)\right)  \stackrel{\!\sim}{\to}  \prod_{s\le\in\le \mathcal S} \pi_{0}(X_{s}) 
\]
be their composition. It remains only to check that $\psi$ is $\g$-equivariant, where the $\g$-action on $\prod_{\e s\le\in\le \mathcal S} \be\pi_{0}(X_{s})$ is induced by the  $\g$-action on $\prod_{\e s\le\in\le \mathcal S}\be X_{\lbe s}\be\!\left(\e\kbar\e\right)$ defined in Remark \ref{rgal} (by the density of $\kbar$-rational points on smooth $\kbar$-schemes). By construction, $\psi$ is induced by the composition of \eqref{eq-r2} and the product of the canonical morphisms $\Re_{\abar_{s}\be/\e\kbar}\!\left(\e\xbar\e\right)\to X_{\lbe s}$ intervening in \eqref{fj}.  Consequently, $\psi$ lifts to the composition of the canonical maps
\[
\hat{\psi} \colon \mathrm{Hom}_{\abar}\!\left(\le\spec\abar, \xbar\e\right)\overset{\!\sim}{\to} \mathrm{Hom}_{\e\kbar}\!\left(\le\spec \kbar,\Re_{\bar A/\le\kbar}\!\left(\xbar\e\right)\right)\overset{\!\sim}{\to}  \prod_{s\le\in\le\mathcal S}\Re_{\abar_{\le s}/\le\kbar}\!\left(\le X _{\be\abar_{\le s}}\right)\!\left(\e\kbar\e\right)\to \prod_{s\le\in\le\mathcal S}  X_{s}\be\!\left(\e\kbar\e\right), 
\]
where the first isomorphism comes from \eqref{wr} and is $\g$-equivariant, the second isomorphism comes from \eqref{eq-r2} and the third map comes from \eqref{fj}.
Further, it is clear that $\hat \psi$ agrees with the $\g$-equivariant map in \eqref{gamma}, whence $\psi$ is indeed $\g$-equivariant.

\section*{Acknowledgements}
We thank the referee for suggesting the precise form that Theorem \ref{main0} should have and for encouraging us to prove this result. The first author was partially supported by PRAT 2013 ``Arithmetic of varieties over number field'', CPDA 135371/13. The second author was partially supported by Fondecyt grant 1120003.

\end{document}